\newtheorem{thm}{Theorem}[section]
\newtheorem{lem}[thm]{Lemma}
\newtheorem{prop}[thm]{Proposition}
\newtheorem{exa}[thm]{Example}
\newcommand{\co}{\mathrm{conv}}
\newcommand{\N}{\mathbb{N}}
\newcommand{\R}{\mathbb{R}}
\newcommand{\dia}{\mathrm{diag}}
\title{Some remarks on $p$-median problem}
\author{ Tijani Pakhrou}
\date{}
\date{\small Department of Mathematics\\
  \small Faculty of Sciences, \small University of Alicante\\
  \small Alicante, \small Spain\\
  tijani.pakhrou@ua.es}
\begin{document}

\maketitle

\section*{Abstract}

\

It is shown that for a given Banach space $X$ and a subspace $Y$ weakly
$\mathcal{K}$-analytic, $L_p(\mu,Y)$ is $p$-simultaneously
proximinal in $L_p(\mu,X)$ whenever $Y$ is $p$-simultaneously
proximinal in $X$.

\begin{itemize}
\item[]{\bf Mathematics Subject Classification (2020)}. 41A28, 41.40.
\item[] {\bf Keywords:} Simultaneous approximation, Best approximation.

\end{itemize}

\section{Introduction}

Throughout this paper, $(X,\|\cdot\|)$  is a real Banach space,
   $(\Omega, \Sigma, \mu)$  a complete probability space
    and  we assume $1\leq p<\infty$. We denote by $L_p(\mu,X)$
   the Banach space of all Bochner $p$-integrable
   functions defined on $\Omega$ with values in $X$ endowed with the
   usual $p$-norm  (Diestel and Uhl, 1977).

Let $Y$ be a subset of $X$ and $n$ a positive integer.
Let $A=\{a_1,\ldots,a_n\}\subset X$ and $m\in[1,\infty)$, we say that
$y_0\in Y$ is a {\em relative $m$-median of $A$ in $Y$} or {\em relative $m$-center of $A$ in $Y$}
 if
\begin{align*}
      \sum_{i=1}^n\|a_i-y_0\|^m \leq
      \sum_{i=1}^n\|a_i-y\|^m, \ \ \forall \, y\in Y.
\end{align*}
We denote by $Z_Y^m(A)$ the set of relative $m$-{\em centers}
of $A$ in $Y$. When $Y=X$, then $Z^m(A)$ denotes the
set of $m$-centers of $A$.
Note that when $A$ has one or two points, the $m$-centers of $A$ are trivial. Indeed, if we take only one point, obviously its $m$-center itself. If we take a couple
of points,  it is easy to prove that their $m$-centers are always in their
convex hull, i.e. the segment defined by these two points.
Moreover, if $X$ has dimension not exceeding $2$, by Helly theorem
(Mendoza and Pakhrou, 2003) we have
$$
Z^m(\{a_1,\ldots,a_n\})\cap\co(\{a_1,\ldots,a_n\})\neq\emptyset,
$$
for every  $a_1,\ldots,a_n\in X$. However, if we consider a set of three points in a
space of dimension greater than two, we lose information about the existence and
location of $m$-centers.

If every $n$-tuple of vectors $a_1,\ldots,a_n\in X$
admits a relative $m$-center in $Y$, then $Y$ is
said to be {\em $m$-simultaneously proximinal in $X$}.
Of course, for $n=1$ the notion of relative $m$-center is just that of {\em best
approximation} and the concept of $m$-simultaneously proximinal coincides with the {\em proximinality} (Mendoza, 1998).

One of the questions studied about the relative $m$-center
is the type of subspaces that are $m$-simultaneously proximinal
in a Banach space.
Natural subspaces of $L_p(\mu,X)$ are either of the kind
$L_p(\mu,Y)$, where $Y$ is a closed subspace of $X$, or of the
kind $L_p(\mu_0,X)$, where $\mu_0$ is the restriction of
$\mu$ to a certain sub-$\sigma$-algebra $\Sigma_0$ of
$\Sigma$.
The main problem we present in this paper is:
{\em Let $1\leq p<+\infty$ and $1\leq m<+\infty$. If \, $Y$ is a subspace of $X$,  $m$-simultaneously proximinal in $X$ and weakly
$\mathcal{K}$-analytic, is
$L_p(\mu,Y)$ $m$-simultaneously proximinal in $L_p(\mu,X)$}?

We denote by  $\ell_m^n(X)$
the Banach space consisting of all $n$-tuple
  $$(x_1,\ldots,x_n)\in X^n=\overbrace{X\times X\times\cdots\times X}^{n \text{ times}}$$ with
  the norm
 $$
 \||(x_1,\ldots,x_n)\||_m:=\Big(\sum_{i=1}^n\|x_i\|^m\Big)^{\frac{1}{m}}, \ \ \
 m\in[1,+\infty).
 $$

Then it is obvious that
$y_0$ is a relative $m$-center of $A=\{a_1,\ldots,a_n\}\subset X$ in $Y$
if and only if $(y_0,\ldots,y_0)$ is a best approximation to $(a_1,\ldots,a_n)$ from the subspace
$$\dia\big(\ell_m^n(Y)\big):=\big\{(y,\ldots,y): y\in Y\big\}.$$

\section{Preliminaries}

Let $X$ be a nonempty set and $\mathcal{A}$ a class of
its subsets. We say that we are given a Souslin scheme
$\{A_{n_1,\ldots,n_k}\}$ with values in $\mathcal{A}$ if, to every
finite sequence $(n_1,\ldots,n_k)$ of natural numbers, there corresponds a set
$A_{n_1,\ldots,n_k}\in\mathcal{A}$. The Souslin operation over the class
$\mathcal{A}$ is a mapping such that to every Souslin scheme
$\{A_{n_1,\ldots,n_k}\}$ with values in $\mathcal{A}$, associates the set
$$
A=\bigcup_{(n_i)_{i\geq1}\in\N^{\N}}\bigcap_{k=1}^\infty A_{n_1,\ldots,n_k}.
$$
The sets $A\subset X$ of this form are called $\mathcal{A}$-Souslin.

We recall that a multivalued mapping $\Psi$
from a topological space
$X$ to the set of nonempty subsets of a topological space $Y$,
is called {\em upper semi-continuous} if for every $x\in X$ and every
open set $V$ in $Y$, containing the set $\Psi(x)$, there exists a
neighborhood $U$ of $x$ such that
$$
\Psi(U):=\bigcup_{u\in U}\Psi(u)\subset V.
$$

Let $X$ be a Hausdorff space and let $\N^{\N}$ be the space of sequences of positive integers endowed with the product topology. A subset $A$ of $X$ is called
$\mathcal{K}$-{\em analytic} if there exists an upper semi-continuous mapping
$\Psi$ on $\N^{\N}$ with values in the set of nonempty compact sets in $X$
such that the equality
$$
A=\bigcup_{(n_i)_{i\geq1}\in\N^{\N}}\Psi\big((n_i)_{i\geq1}\big)
$$
holds.

The metric projection on a subset $Y$ of a Banach space $(X,\|\cdot\|)$ is
the multivalued mapping given by
$$
P_Y(x)=\big\{y\in Y: \|x-y\|=\inf_{z\in Y}\|x-z\|\big\}
$$
for every $x\in X$. Note that $P_Y(x)$ may be the empty set for some $x\in X$.

Let $Y$, $Z$ be topological spaces. A selector for a multivalued
mapping $\phi: Y\longrightarrow 2^Z$ is a singlevalued mapping $f:Y\longrightarrow Z$
such that $f(y)\in \phi(y)$ for every $y\in Y$.
A set in a Hausdorff space is called {\em analytic} if it is the image
of a complete separable metric space under a continuous mapping.
We will say that a map $f:Y\longrightarrow Z$ is {\em analytic measurable} if the preimage of any
Borel subset of $Z$ belongs to the smallest $\sigma$-algebra containing
the analytic subsets of $Y$.

\begin{thm}[Cascales and Raja, 2003]\label{thm:2}
Let $X$ be a Banach space and $Y$ a proximinal and weakly
$\mathcal{K}$-analytic convex subset of $X$. Then, for every separable
closed subset $M\subset X$ the metric projection $P_Y|_M:M\longrightarrow 2^Y$
has an analytic measurable selector with separable range.
\end{thm}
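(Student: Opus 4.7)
The plan is to parametrize $Y$ by the Polish space $\N^{\N}$ using its weak $\mathcal{K}$-analyticity and then apply measurable-selection machinery. Fix an upper semi-continuous map $\Psi:\N^{\N}\to 2^{Y}$ with nonempty weakly compact values whose union equals $Y$. Since $Y$ is proximinal, for each $x\in M$ the set $P_Y(x)$ is nonempty; being the set of minimizers of the continuous convex function $y\mapsto\|x-y\|$ on the convex set $Y$, it is closed, bounded, and convex, hence weakly closed in $Y$.

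The first substantial step is to introduce the multivalued map
\[
\Phi:M\longrightarrow 2^{\N^{\N}},\qquad \Phi(x)=\bigl\{\sigma\in\N^{\N}:\Psi(\sigma)\cap P_Y(x)\neq\emptyset\bigr\},
\]
and to show that its graph is an analytic subset of $M\times\N^{\N}$. The upper semi-continuity of $\Psi$ into the weakly compact subsets of $X$, combined with the observation that
\[
\Psi(\sigma)\cap P_Y(x)\neq\emptyset\iff \inf_{y\in\Psi(\sigma)}\|x-y\|=\dit(x,Y),
\]
lets one realize this graph through a Souslin operation applied to Borel sets, so it is analytic.

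The second step is to apply a Jankov--von Neumann type selection theorem to $\Phi$, producing an analytic-measurable map $\sigma_0:M\to\N^{\N}$ with $\sigma_0(x)\in\Phi(x)$. The auxiliary map $x\mapsto K(x):=\Psi(\sigma_0(x))\cap P_Y(x)$ then has nonempty weakly compact values and an analytic-measurable graph, so a second selection, for instance by picking the point of $K(x)$ closest to a fixed dense sequence in a separable norming subspace (using weak compactness of $K(x)$), furnishes a map $f:M\to Y$ with $f(x)\in P_Y(x)$ measurable with respect to the $\sigma$-algebra generated by analytic sets.

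The main obstacle is the separable-range requirement. Since $M$ is separable and $\sigma_0$ is analytic measurable, $\sigma_0(M)$ is analytic and hence separable in $\N^{\N}$. Its image $\Psi(\sigma_0(M))$ under the upper semi-continuous weakly compact valued map $\Psi$ is weakly $\mathcal{K}$-analytic; by standard results on such sets parametrized by a separable subset of $\N^{\N}$, it is contained in a norm-separable subset of $X$. Since $f(x)\in\Psi(\sigma_0(x))$ for every $x\in M$, the range of $f$ is norm-separable. I expect the most delicate point to be the analyticity of the graph of $\Phi$, where the interplay between the norm metric, the weak topology on $Y$, and the Souslin structure of $Y$ has to be handled carefully.
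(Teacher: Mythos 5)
This theorem is quoted in the paper from Cascales and Raja (2003); the paper itself contains no proof of it, so your attempt can only be measured against the argument in that reference. Your overall strategy --- parametrize $Y$ by an upper semi-continuous compact-valued map $\Psi$ on $\N^{\N}$, extract an analytic/Souslin structure, and apply a Jankov--von Neumann type selection --- is indeed the strategy of the actual proof. But two of your steps have genuine gaps.

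First, the analyticity of the graph of $\Phi(x)=\{\sigma:\Psi(\sigma)\cap P_Y(x)\neq\emptyset\}$, which you yourself flag as the delicate point, does not follow from what you wrote. The equivalence $\Psi(\sigma)\cap P_Y(x)\neq\emptyset\iff\inf_{y\in\Psi(\sigma)}\|x-y\|=\dit(x,Y)$ is correct (weak compactness of $\Psi(\sigma)$ plus weak lower semicontinuity of the norm), but $\Psi$ is upper semi-continuous only for the \emph{weak} topology, and weak neighborhoods of $\Psi(\sigma)$ give no control on the \emph{norm} distance $\dit(x,\Psi(\sigma'))$ for nearby $\sigma'$; so $\sigma\mapsto\dit(x,\Psi(\sigma))$ has no evident semicontinuity and the graph is not visibly obtained from Borel sets by a Souslin operation. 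The reference sidesteps this entirely: instead of a set in $M\times\N^{\N}$ it builds a Souslin scheme of subsets of $M$ alone, namely $M_{n_1,\ldots,n_k}=\{x\in M:\dit(x,Y_{n_1,\ldots,n_k})=\dit(x,Y)\}$ where $Y_{n_1,\ldots,n_k}=\bigcup\{\Psi(\tau):\tau_i=n_i,\ i\leq k\}$; these are closed because for a \emph{fixed} set the map $x\mapsto\dit(x,A)-\dit(x,Y)$ is $2$-Lipschitz, and upper semi-continuity of $\Psi$ is used only to show that $M=\bigcup_{\sigma}\bigcap_k M_{\sigma|k}$ and that $\bigcap_k\overline{Y_{\sigma|k}}^{\,w}\cap P_Y(x)\neq\emptyset$ along the selected branch. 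You would need to replace your first step by something of this kind.

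Second, and more seriously, your separable-range argument is false as stated. From the separability of $\sigma_0(M)$ in $\N^{\N}$ you conclude that $\Psi(\sigma_0(M))$ ``is contained in a norm-separable subset of $X$''; but already a single value $\Psi(\sigma)$ is merely weakly compact, and weakly compact sets need not be norm separable (the unit ball of $\ell_2(\Gamma)$ for uncountable $\Gamma$ is weakly compact and nonseparable --- note that $Y$ itself is weakly $\mathcal{K}$-analytic and is certainly not assumed separable). So no ``standard result'' delivers norm separability of $\Psi(\sigma_0(M))$, and the separability of the range of $f$ does not follow from $f(x)\in\Psi(\sigma_0(x))$. Your fallback device --- choosing in $K(x)=\Psi(\sigma_0(x))\cap P_Y(x)$ the point closest to a fixed dense sequence --- also does not work: $K(x)$ is weakly compact but not convex, so such nearest points are neither unique nor selected measurably, and even if they were, their union over a nonseparable family of values $K(x)$ need not be separable. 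The separability of the range is the substantive content of the theorem beyond bare measurable selection, and it requires a dedicated argument (in the reference it is extracted from the structure of the Souslin scheme and the separability of $M$), not a soft topological remark.
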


Observe that the class of proximinal vector subspaces $Y$ which are $\mathcal{K}$-analytic
for the weak topology contains the reflexive subspaces,
proximinal separable subspaces, proximinal weakly compactly generated (WCG)
subspaces and proximinal quasi-reflexive subspaces, among others.

\begin{prop}\label{pro:1}
Let $X$ be a Banach space and $Y$ a weakly $\mathcal{K}$-analytic
subspace of $X$ which is $m$-simultaneously proximinal in $X$. Consider a
complete probability space $(\Omega, \Sigma, \mu)$ and
$f_1,\ldots,f_n:\Omega\longrightarrow X$ be $\mu$-measurable
functions. Then there exists a $\mu$-measurable function $g_0:\Omega\longrightarrow Y$
such that $g_0(s)$ is a relative $m$-center of
$\{f_1(s), \ldots,f_n(s)\}\subset X$ in $Y$,
for almost all $s\in\Omega$.
\end{prop}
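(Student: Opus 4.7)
The strategy is to translate the problem into a standard best-approximation problem inside $\ell_m^n(X)$ and then invoke Theorem~\ref{thm:2}.

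First I would work in the Banach space $E:=\ell_m^n(X)$ with the closed linear subspace $D:=\dia\big(\ell_m^n(Y)\big)$. By the remark at the end of Section~1, $y_0\in Y$ is a relative $m$-center of $\{a_1,\dots,a_n\}$ in $Y$ if and only if $(y_0,\dots,y_0)$ is a best approximation to $(a_1,\dots,a_n)$ from $D$. Hence the hypothesis that $Y$ is $m$-simultaneously proximinal in $X$ says exactly that $D$ is proximinal in $E$. Moreover $D$ is convex, and because the diagonal map $y\mapsto(y,\dots,y)$ is a continuous (therefore weak-to-weak continuous) linear embedding $Y\hookrightarrow E$, the subspace $D$ inherits weak $\mathcal{K}$-analyticity from $Y$. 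So Theorem~\ref{thm:2} is applicable to the pair $(E,D)$.

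Next I would package the measurable data into a single map $F:\Omega\to E$, $F(s):=(f_1(s),\dots,f_n(s))$. Each $f_i$ being $\mu$-measurable, Pettis' theorem gives that each is essentially separably valued; after discarding a common $\mu$-null set I may assume that $F$ is strongly measurable with range contained in a separable closed subset $M\subset E$. Theorem~\ref{thm:2}, applied to this $M$, then produces an analytic measurable selector $\varphi:M\to D$ of the metric projection $P_D|_M$, with separable range.

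Finally I would define $g_0:\Omega\to Y$ through $(\varphi\circ F)(s)=(g_0(s),\dots,g_0(s))$ and show that $g_0$ is $\mu$-measurable. This is the step I expect to be the main technical point: $\varphi$ is only analytic measurable, i.e.\ preimages of Borel sets land in the $\sigma$-algebra generated by the analytic subsets of $M$, rather than in the Borel $\sigma$-algebra. The way around this is the classical fact that analytic sets are universally measurable; consequently $\varphi$ is measurable with respect to the image measure $\mu\circ F^{-1}$, and combined with $\mu$-measurability of $F$ and the completeness of $(\Omega,\Sigma,\mu)$ this yields $\mu$-measurability of $\varphi\circ F$, hence of $g_0$. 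The selector property of $\varphi$ then immediately forces $g_0(s)$ to be a relative $m$-center of $\{f_1(s),\dots,f_n(s)\}$ in $Y$ for almost every $s\in\Omega$, which is what we wanted.
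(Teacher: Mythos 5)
Your proposal is correct and follows essentially the same route as the paper: pass to $\ell_m^n(X)$ with the diagonal subspace, apply Theorem~\ref{thm:2} to the closure of the (essentially separable) range of $s\mapsto(f_1(s),\dots,f_n(s))$, compose with the analytic measurable selector, and use completeness of the measure space (stability under the Souslin operation) to get $\mu$-measurability before reading off the diagonal component. The only cosmetic difference is that you justify weak $\mathcal{K}$-analyticity of the diagonal via the continuous linear embedding $y\mapsto(y,\dots,y)$, whereas the paper invokes stability of $\mathcal{K}$-analyticity under countable products; both are fine, and you also make explicit the (implicitly used) fact that $m$-simultaneous proximinality of $Y$ is exactly proximinality of the diagonal.
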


\begin{proof}
Since $f_1,\ldots,f_n:\Omega\longrightarrow X$ are $\mu$-measurable
functions. Is immediate that the function
$$
\begin{array}{cccl}
f:    &  \Omega   &  \longrightarrow & \ell_m^n(X) \\
      &  s        & \longrightarrow  & f(s)=\big(f_1(s),\ldots,f_n(s)\big)
\end{array}
$$
is $\mu$-measurable. There exists $\Omega_0\in\Sigma$
such that $\mu(\Omega\backslash\Omega_0)=0$ and $f(\Omega_0)$ is a
separable subset of $\ell_m^n(X)$ for the norm topology.

On the other hand, countable product of $\mathcal{K}$-analytic sets is $\mathcal{K}$-analytic. Hence, since $Y$ is weakly $\mathcal{K}$-analytic, $\dia(\ell_m^n(Y))$ is weakly $\mathcal{K}$-analytic.

Denoting by $M$ the closure of $f(\Omega_0)$ in $\ell_m^n(X)$ for the norm topology,
from Theorem \ref{thm:2}, there exists
$$
h:M\longrightarrow \dia\big(\ell_m^n(Y)\big)
$$
an analytic selector with separable range for the norm topology,
of  the metric projection
$$
P_{{\dia(\ell_m^n(Y))}|_M}:M\longrightarrow 2^{\, \dia(\ell_m^n(Y))}.
$$
The function
$$
\begin{array}{cccl}
g:    &  \Omega   &  \longrightarrow & \dia\big(\ell_m^n(Y)\big) \\ \\
        &  s        & \longrightarrow  &
        g(s)=\left\{ \begin{array}{ccl}
       h\big(f(s)\big) & \text{ if } & s\in \Omega_0 \\
      0   & \text{ if } & s\in\Omega\backslash\Omega_0
\end{array}\right.
\end{array}
$$
is $\mu$-measurable, where the measurability follows from the fact that
$f|_{\Omega_0}$ is $\Sigma|_{\Omega_0}$-analytic measurable because
a complete probability space is stable by the Souslin operation,
(Kechris, 1995), and $h$ is analytic measurable.
By construction, $g$ satisfies,
$$
g(s)=
       h\big(f(s)\big)\in P_{{\dia(\ell_m^n(Y))}|_M}\big(f(s)\big),
        \ \ \text{ if } \ \ \ s\in \Omega_0.
$$
Then,
$$
\||f(s)-g(s)\||_m=\inf_{u\in \dia(\ell_m^n(Y))}\||f(s)-u\||_m,
\ \ \text{ if } \ \ \ s\in \Omega_0,
$$
 so $g(s)$ is a best approximation of $f(s)$ for almost all $s\in \Omega$.

Finally, we define the function $g_0:\Omega\longrightarrow Y$ by
$g_0(s)=y_s$ where $y_s$ is the component of $g(s)=(y_s,\ldots,y_s)\in\dia(\ell_m^n(Y))$ if $s\in \Omega_0$ and
$g_0(s)=0$ otherwise. It is clear that $g_0$ is $\mu$-measurable.

Thus, by taking $y\in Y$ we have
$$
\||f(s)-g(s)\||_m\leq\||f(s)-u\||_m
$$
for almost all $s\in\Omega$,
where $u=(y,\ldots,y)\in\dia(\ell_m^n(Y)) $. This implies that
$$
\sum_{i=1}^n\|f_i(s)-g_0(s)\|^m\leq \sum_{i=1}^n\|f_i(s)-y\|^m
$$
for almost all $s\in\Omega$.

Therefore, $g_0(s)$
is a relative $m$-center of
$\{f_1(s), \ldots,f_n(s)\}\subset X$ in $Y$
for almost all $s\in\Omega$.

\end{proof}

\section{Main results}

The notion of relative $p$-center in $Y$ for almost all $s\in\Omega$, implies the
relative $p$-center in $L_p(\mu, Y)$, when
$(\Omega, \Sigma, \mu)$ is a complete probability space and $1\leq p<+\infty$.

\begin{prop}\label{pro:2}
Let $X$ be a Banach space and $Y$ a closed subspace of $X$. Given any functions
$f_1,\ldots,f_n\in L_p(\mu, X)$ and $g\in L_p(\mu, Y)$.
 Then if
$g(s)$ is a relative $p$-center of
$\{f_1(s), \ldots,f_n(s)\}\subset X$ in $Y$,
for almost all $s\in\Omega$,
$g$ is a relative $p$-center of
$\{f_1,\ldots, f_n\}\subset L_p(\mu, X)$ in $L_p(\mu, Y)$.
\end{prop}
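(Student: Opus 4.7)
The plan is to reduce the statement to a direct integration of the pointwise inequality that encodes the relative $p$-center property. Since the norm on $L_p(\mu,X)$ is given pointwise by $\|f\|_{L_p}^p=\int_\Omega\|f(s)\|^p\,d\mu(s)$, the conclusion
\begin{align*}
\sum_{i=1}^n\|f_i-g\|_{L_p(\mu,X)}^p\;\leq\;\sum_{i=1}^n\|f_i-h\|_{L_p(\mu,X)}^p
\end{align*}
for every $h\in L_p(\mu,Y)$ will follow immediately from the corresponding a.e. inequality between the integrands, integrated over $\Omega$.

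First I would fix an arbitrary test function $h\in L_p(\mu,Y)$. Since $h$ takes values in $Y$, for a.e. $s\in\Omega$ the element $h(s)$ is an admissible test point in the definition of a relative $p$-center of $\{f_1(s),\ldots,f_n(s)\}$ in $Y$. Combining the null set on which $g(s)$ fails to be a relative $p$-center with the null set on which $h(s)\notin Y$, the hypothesis applied with $y=h(s)$ yields
\begin{align*}
\sum_{i=1}^n\|f_i(s)-g(s)\|^p\;\leq\;\sum_{i=1}^n\|f_i(s)-h(s)\|^p\quad\text{for a.e. }s\in\Omega.
\end{align*}
Integrating this inequality over $\Omega$ with respect to $\mu$, exchanging the finite sum with the integral, and using the identity $\|f_i-g\|_{L_p}^p=\int_\Omega\|f_i(s)-g(s)\|^p\,d\mu(s)$ (and analogously for $h$), gives exactly the desired inequality.

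The integration step is trouble-free because $f_i,g\in L_p(\mu,X)$ and $h\in L_p(\mu,Y)\subset L_p(\mu,X)$, so each pointwise function $\|f_i(s)-g(s)\|^p$ and $\|f_i(s)-h(s)\|^p$ belongs to $L^1(\mu)$ by the triangle inequality and the convexity estimate $(a+b)^p\leq 2^{p-1}(a^p+b^p)$. I do not foresee a real obstacle: the entire content of the proposition is the observation that specialising the a.e. pointwise extremal inequality to the value $y=h(s)$ at each $s$ and then integrating reproduces the $L_p$-level extremal inequality. No measurability or selection argument is needed here, since $h$ is already given as a measurable function into $Y$.
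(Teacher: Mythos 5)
Your proposal is correct and follows essentially the same route as the paper: fix $h\in L_p(\mu,Y)$, specialise the a.e. pointwise inequality to $y=h(s)$, and integrate over $\Omega$. The only difference is your explicit (and harmless) remark on integrability, which the paper omits since the pointwise inequality between nonnegative measurable functions integrates directly.
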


\begin{proof}
Let $h\in L_p(\mu,Y)$. Since $g(s)$ is a relative $p$-center of
$\{f_1(s),\ldots, f_n(s)\}$$\subset X$ in $Y$, for almost all $s\in\Omega$, one has
\begin{align*}
\sum_{i=1}^n\|f_i(s)-g(s)\|^p\leq \sum_{i=1}^n\|f_i(s)-h(s)\|^p,
\end{align*}
for almost all $s\in\Omega$.
Thus
\begin{align*}
\sum_{i=1}^n\int_\Omega\|f_i(s)-g(s)\|^p \,d\mu(s)
\leq \sum_{i=1}^n\int_\Omega\|f_i(s)-h(s)\|^p\,d\mu(s).
\end{align*}
 Therefore, we get
\begin{align*}
\sum_{i=1}^n\|f_i-g\|_p^p\leq \sum_{i=1}^n\|f_i-h\|_p^p,
\ \ \ \forall\, h\in L_p(\mu,Y),
\end{align*}
which means that $g$ is is a relative $p$-center of
$\{f_1,\ldots,f_n\}\subset L_p(\mu, X)$ in $L_p(\mu, Y)$.
\end{proof}
Observe that  the previous proposition fails if we take relative
$m$-center with $m\neq p$, as it is shown in the following example.

\begin{exa}\label{exa:1}
By taking $\Omega=\{s_1,s_2\}$, we define $\Sigma=\mathcal{P}(\Omega)$ and
$\mu(\{s_1\})=\mu(\{s_2\})=1$. Let $X=\R^2$ with the euclidean norm and
$Y=\{(x,0): x\in\R\}$. Consider the functions
 $f_1,f_2,g:\Omega\longrightarrow X$
    \begin{align*}
        & f_1(s_1):=(-1,1), \ \ \ f_1(s_2):=(2,2)\\
        & f_2(s_1):=(2,2), \ \ \ f_2(s_2):=(-1,1)\\
        & g(s_1):=\Big(\frac{1}{2},0\Big), \ \ \ g(s_2):=\Big(\frac{1}{2},0\Big).
    \end{align*}
Then $g(s)$ is a relative $2$-center of
$\{f_1(s), f_2(s)\}\subset X$ in $Y$,
for all $s\in\Omega$, but $g$ is not a relative $2$-center of
$\{f_1, f_2\}\subset L_1(\mu,X)$ in $L_1(\mu,Y)$.
\end{exa}

\begin{proof}
We have that
$$
\|f_1(s)-g(s)\|^2+\|f_2(s)-g(s)\|^2\leq \|f_1(s)-y\|^2+\|f_2(s)-y\|^2
$$
for all $y\in Y$ and all $s\in\Omega$. Hence  $g(s)$ is a relative $2$-center of
$\{f_1(s), f_2(s)\}\subset X$ in $Y$, for all $s\in\Omega$.

On the other hand
\begin{align*}
  \|f_1-&g\|_1^2+\|f_2-g\|_1^2=\bigg(\int_\Omega\|f_1(s)-g(s)\|d\mu(s)\bigg)^2+\\
 &+\bigg(\int_\Omega\|f_2(s)-g(s)\|d\mu(s)\bigg)^2=\\
 & =\big(\|f_1(s_1)-g(s_1)\|+\|f_1(s_2)+g(s_2)\|\big)^2\\
 & + \big(\|f_2(s_1)-g(s_1)\|+\|f_2(s_2)+g(s_2)\|\big)^2= \\
  &=\Bigg(\sqrt{\Big(\frac{3}{2}\Big)^2+1}+\sqrt{\Big(\frac{3}{2}\Big)^2+2^2}\Bigg)^2
  +\Bigg(\sqrt{\Big(\frac{3}{2}\Big)^2+2^2}+\sqrt{\Big(\frac{3}{2}\Big)^2+1}\Bigg)^2= \\
&=2\Bigg(\sqrt{\frac{13}{4}}+\sqrt{\frac{25}{4}}\Bigg)^2=
\frac{1}{2}(13+10\sqrt{13}+25)=19+5\sqrt{13}.
\end{align*}
We define  $h:\Omega\longrightarrow Y$ by
$$
h(s_1)=h(s_2):=(0,0).
$$
Then, \begin{align*}
 \|f_1-h\|_1^2+\|f_2-h\|_1^2&=
 \big(\|f_1(s_1)-h(s_1)\|+\|f_1(s_2)-h(s_2)\|\big)^2+\\
 &+ \big(\|f_2(s_1)-h(s_1)\|+\|f_2(s_2)-h(s_2)\|\big)^2=  \\
 &=\Big(\sqrt{2}+\sqrt{8}\Big)^2+\Big(\sqrt{8}+\sqrt{2}\Big)^2=
 36
\end{align*}
Since $5\sqrt{13}>18$ we have $36<19+5\sqrt{13}$, one has
$$
\|f_1-h\|_1^2+\|f_2-h\|_1^2<\|f_1-g\|_1^2+\|f_2-g\|_1^2,
$$
which means $g$ is not a relative $2$-center of
$\{f_1, f_2\}\subset L_1(\mu,X)$ in $L_1(\mu,Y)$.
\end{proof}

\begin{lem}\label{lemm:3}
Let $Y$ be a closed subspace of Banach space $X$ and
$f_1,\ldots,f_n\in L_p(\mu, X)$. Let
 $g:\Omega\longrightarrow Y$ be a $\mu$-measurable function such
that $g(s)$ is a relative $p$-center of
$\{f_1(s), \ldots,f_n(s)\}\subset X$ in $Y$
for almost all $s\in\Omega$. Then
$g$ is a relative $p$-center of
$\{f_1,\ldots, f_n\}\subset L_p(\mu, X)$ in $L_p(\mu, Y)$.
\end{lem}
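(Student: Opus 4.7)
The lemma is the same conclusion as Proposition \ref{pro:2}, but with a weaker hypothesis on $g$: only $\mu$-measurability into $Y$, not membership in $L_p(\mu,Y)$. So the whole plan reduces to one extra step on top of Proposition \ref{pro:2}, namely: \emph{first show $g\in L_p(\mu,Y)$, then invoke Proposition \ref{pro:2}}.

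To prove $g\in L_p(\mu,Y)$, I will use the fact that $Y$ is a subspace, so $0\in Y$ is an admissible competitor. The $p$-center property of $g(s)$ applied at $y=0$ gives
\[
\sum_{i=1}^n\|f_i(s)-g(s)\|^p \;\le\; \sum_{i=1}^n\|f_i(s)\|^p
\]
for almost every $s\in\Omega$. In particular $\|f_1(s)-g(s)\|^p\le \sum_{i=1}^n\|f_i(s)\|^p$ a.e. Combining with the triangle inequality and the elementary estimate $(a+b)^p\le 2^{p-1}(a^p+b^p)$, this yields a pointwise bound of the form
\[
\|g(s)\|^p \;\le\; 2^{p-1}\Big(\|f_1(s)\|^p+\|f_1(s)-g(s)\|^p\Big)\;\le\; C\sum_{i=1}^n\|f_i(s)\|^p
\]
a.e., with $C=2^{p-1}$. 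Since each $f_i\in L_p(\mu,X)$, the right-hand side is $\mu$-integrable, so $\|g(\cdot)\|^p$ is integrable; combined with the $\mu$-measurability of $g$ into $Y$, this gives $g\in L_p(\mu,Y)$.

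With $g\in L_p(\mu,Y)$ in hand, the hypotheses of Proposition \ref{pro:2} are exactly satisfied, and applying it directly delivers that $g$ is a relative $p$-center of $\{f_1,\ldots,f_n\}\subset L_p(\mu,X)$ in $L_p(\mu,Y)$.

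There is no real obstacle here; the only subtlety worth noting is that the argument depends crucially on $Y$ being a (closed) \emph{subspace}, so that the zero function lies in $L_p(\mu,Y)$ and can be used as a competitor to bound $\|g(s)\|$. If $Y$ were merely a closed convex proximinal set not containing $0$, one would have to pick a fixed element $y_0\in Y$ and carry out the same computation relative to the constant function $y_0$; the estimate and conclusion would be unchanged up to constants.
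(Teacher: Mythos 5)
Your proposal is correct and follows essentially the same route as the paper: use $y=0$ as a competitor in the pointwise $p$-center inequality, bound $\|g(s)\|^p$ via the triangle inequality and $(a+b)^p\le 2^{p-1}(a^p+b^p)$ by an integrable function of the $\|f_i(s)\|^p$, conclude $g\in L_p(\mu,Y)$, and then invoke Proposition \ref{pro:2}. (One cosmetic slip: the constant in your final displayed bound should be $2^p$ rather than $2^{p-1}$, since $\|f_1(s)\|^p+\|f_1(s)-g(s)\|^p\le 2\sum_{i=1}^n\|f_i(s)\|^p$; this has no effect on the argument.)
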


\begin{proof}
Since $g(s)$ is a relative $p$-center of
$\{f_1(s), \ldots,f_n(s)\}\subset X$ in $Y$,
for almost all $s\in\Omega$, we have
$$
\sum_{i=1}^n\|f_i(s)-g(s)\|^p\leq \sum_{i=1}^n\|f_i(s)-y\|^p
$$
for almost all $s\in\Omega$ and each $y\in Y$.

On the other hand, for $i_0\in\{1,\ldots,n\}$ it follows
\begin{align*}
  \|g(s)\|^p&\leq\big(\|f_{i_0}(s)-g(s)\|+\|f_{i_0}(s)\|\big)^p\leq\\
           &\leq 2^{p-1}\big(\|f_{i_0}(s)-g(s)\|^p+\|f_{i_0}(s)\|^p\big)\leq\\
          &\leq 2^{p-1}\sum_{i=1}^n\|f_i(s)-g(s)\|^p+2^{p-1}\|f_{i_0}(s)\|^p\leq\\
          &\leq 2^{p-1}\sum_{i=1}^n\|f_i(s)-y\|^p+2^{p-1}\|f_{i_0}(s)\|^p,
\end{align*}
for almost all $s\in\Omega$ and each $y\in Y$.
Now by taking $y=0$, we get
$$
\|g(s)\|^p\leq  2^{p-1}\sum_{i=1}^n\|f_i(s)\|^p+2^{p-1}\|f_{i_0}(s)\|^p
$$
for almost all $s\in\Omega$.
 Thus
 $$
 \|g\|_p^p\leq 2^{p-1}\sum_{i=1}^n\|f_i\|_p^p+2^{p-1}\|f_{i_0}\|_p^p.
 $$
 Hence $g\in L_p(\mu,Y)$ and
 from Proposition \ref{pro:2}, it
 is a relative $p$-center of
$$\{f_1,\ldots, f_n\}\subset L_p(\mu, X)$$ in $L_p(\mu, Y)$.
\end{proof}

\begin{thm}\label{thm:1}
Let $X$ be a Banach space.
If \, $Y$ is a subspace of $X$, $p$-simultaneously proximinal in $X$ and weakly
$\mathcal{K}$-analytic, then
$L_p(\mu,Y)$ is $p$-simultaneously proximinal in $L_p(\mu,X)$.
\end{thm}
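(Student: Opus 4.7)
The plan is to assemble the theorem directly from the two preceding results, Proposition \ref{pro:1} and Lemma \ref{lemm:3}, which were designed exactly for this purpose. Fix arbitrary $f_1,\ldots,f_n\in L_p(\mu,X)$; the goal is to exhibit a relative $p$-center of $\{f_1,\ldots,f_n\}$ in $L_p(\mu,Y)$.

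First I would apply Proposition \ref{pro:1} with $m=p$. Its hypotheses match ours precisely: $(\Omega,\Sigma,\mu)$ is a complete probability space, $Y$ is weakly $\mathcal{K}$-analytic and $p$-simultaneously proximinal in $X$, and the Bochner $p$-integrable functions $f_1,\ldots,f_n$ are in particular $\mu$-measurable. The proposition then furnishes a $\mu$-measurable function $g_0:\Omega\longrightarrow Y$ such that, for almost every $s\in\Omega$, the value $g_0(s)$ is a relative $p$-center of $\{f_1(s),\ldots,f_n(s)\}$ in $Y$.

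Next I would feed $g_0$ into Lemma \ref{lemm:3}. The lemma is exactly the bridge from pointwise relative $p$-centers to relative $p$-centers in $L_p(\mu,Y)$: its proof first shows, by a triangle-inequality and $(a+b)^p\le 2^{p-1}(a^p+b^p)$ estimate using the pointwise optimality with test element $y=0$, that $g_0$ automatically lies in $L_p(\mu,Y)$, and then invokes Proposition \ref{pro:2} to conclude that $g_0$ is a relative $p$-center of $\{f_1,\ldots,f_n\}$ in $L_p(\mu,Y)$. Since $f_1,\ldots,f_n$ were arbitrary, $L_p(\mu,Y)$ is $p$-simultaneously proximinal in $L_p(\mu,X)$.

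The substantive difficulties have all been discharged in the preliminary work: Proposition \ref{pro:1} relies on the Cascales--Raja selection theorem (Theorem \ref{thm:2}) together with the stability of weak $\mathcal{K}$-analyticity under countable products in order to select a measurable pointwise center, while Lemma \ref{lemm:3} handles the $L_p$-integrability of the selector and the passage from pointwise to integral optimality (crucially in the case $m=p$, where the example in Example \ref{exa:1} confirms that matching $m$ with $p$ is essential). Once those two are in hand, the theorem itself is essentially a one-line concatenation, and no further obstacle remains.
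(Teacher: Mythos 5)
Your proposal is correct and follows exactly the same route as the paper's own proof: apply Proposition \ref{pro:1} with $m=p$ to obtain a measurable pointwise relative $p$-center $g_0$, then invoke Lemma \ref{lemm:3} to conclude that $g_0$ lies in $L_p(\mu,Y)$ and is a relative $p$-center of $\{f_1,\ldots,f_n\}$ there. No discrepancies to report.
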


\begin{proof}
Let $f_1,\ldots,f_n$ be functions in $L_p(\mu,X)$. Then
Proposition
\ref{pro:1} guarantees the existence
 a $\mu$-measurable function $g_0:\Omega\longrightarrow Y$
such that $g_0(s)$ is a relative $p$-center of
$\{f_1(s), \ldots,f_n(s)\}\subset X$ in $Y$
for almost all $s\in\Omega$.
By Lemma \ref{lemm:3} it follows that
$g_0$ is a relative $p$-center of
$\{f_1, \ldots,f_n\}\subset L_p(\mu,X)$ in $L_p(\mu,Y)$.
\end{proof}

\begin{prop}
Let $(H,\|\cdot\|)$ be a Hilbert space, $1\leq p<+\infty$, $p\neq2$ and $m=2$.
Then there exists a three-point set
 $\{f_1,f_2,f_3\}\subset L_p(\mu,H)$ and $g\in L_p(\mu,H)$ such
 that
$g(s)\in H$ is a $2$-center of $\{f_1(s), f_2(s),f_3(s)\}\subset H$,
for every $s\in\Omega$. But $g$ is not a $2$-center of
$\{f_1, f_2,f_3\}$.
\end{prop}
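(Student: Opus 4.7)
The plan is to exploit the Hilbertian identity that the $2$-center of any finite set $\{a_1,\ldots,a_n\}\subset H$ coincides with the centroid $\bar a = \frac{1}{n}\sum_i a_i$, which one verifies by expanding $\sum\|a_i-y\|^2$ via the inner product. Consequently, taking $g(s) := \frac{1}{3}(f_1(s)+f_2(s)+f_3(s))$ automatically realises the pointwise $2$-center condition, and the content of the proposition reduces to finding three functions whose pointwise centroid fails to be the $2$-center in $L_p(\mu,H)$ when $p\neq 2$.

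I would use $H = \R$, $\Omega = \{s_1,s_2\}$ with $\mu(\{s_1\}) = \mu(\{s_2\}) = 1/2$, and the three functions, written as pairs $(f_i(s_1),f_i(s_2))$,
\[
f_1 = (1,2), \qquad f_2 = (-2,-1), \qquad f_3 = (1,-1).
\]
Then $f_1+f_2+f_3 \equiv 0$, so $g \equiv 0$ is the pointwise $2$-center by the remark above. The crucial feature of this choice is that $f_i(s_j) \neq 0$ for every $i,j$, which will guarantee Frechet differentiability of $h\mapsto\|f_i-h\|_p$ at $h=0$ for every $p\in[1,\infty)$.

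To show that $g = 0$ is not a $2$-center in $L_p(\mu,\R)$, I would test first-order optimality of $F(h) := \sum_{i=1}^3 \|f_i-h\|_p^2$ at $h=0$ in the direction $\phi := (1,-1)$. Applying the chain rule through $\|f_i-h\|_p^2 = \big(\sum_s \mu(\{s\})|f_i(s)-h(s)|^p\big)^{2/p}$ yields, up to a positive normalisation constant,
\[
F'(0)\phi \;=\; 4\,(1+2^p)^{(2-p)/p}\,(2^{p-1}-1) \;-\; 2^{\,1+2/p}.
\]
This vanishes at $p=2$ (as it must, since $L_2(\mu,\R)$ is Hilbert and then the centroid really is the $2$-center) and is strictly nonzero for every other $p \in [1,\infty)$: at $p=1$ the first summand is $0$ and $F'(0)\phi = -8$, while a monotonicity check in $p$ shows that the expression crosses zero only at $p=2$. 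A nonzero Gateaux derivative gives $F(\varepsilon\phi) < F(0)$ for a suitably signed small $\varepsilon$, contradicting the claim that $g=0$ is a $2$-center.

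The principal obstacle is the endpoint $p = 1$, where the $L_1$-norm is only directionally differentiable and sub-differentials at corners can swallow the would-be negative variation. For instance, the symmetric candidate $f_1 = (2,0),\ f_2 = (0,2),\ f_3 = (-2,-2)$, though it has pointwise centroid $0$, actually makes $h=0$ a global minimum of $F$ in $L_1$: the zero values of the $f_i$ create $|\phi|$-terms in the directional derivative that cancel the negative contributions from the nonzero coordinates. Requiring $f_i(s_j)\neq 0$ for every $i,j$ circumvents this degeneracy and lets the single derivative computation above close the argument uniformly for all $p\in[1,\infty)\setminus\{2\}$.
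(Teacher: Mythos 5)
Your proposal is correct in its core mechanism and shares the paper's key observation --- that in a Hilbert space the $2$-center of a finite set is its centroid, so the pointwise centroid $g(s)=\frac{1}{3}\big(f_1(s)+f_2(s)+f_3(s)\big)$ is automatically a pointwise $2$-center --- but from there the two arguments diverge. The paper invokes the theorem of Ben\'{\i}tez, Fern\'{a}ndez and Soriano: since $L_p(\mu,H)$ with $p\neq2$ is not an inner product space, there exist $f_1,f_2,f_3$ whose set of $2$-centers is disjoint from $\mathrm{conv}(\{f_1,f_2,f_3\})$; as the centroid $g$ lies in that convex hull, it cannot be a $2$-center, and no computation is needed. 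You instead build an explicit two-atom example and destroy optimality of $g=0$ by a first-variation computation. I checked your derivative: it equals $2\big(\tfrac{1+2^p}{2}\big)^{(2-p)/p}(2^{p-1}-1)-2$, which is your expression $4(1+2^p)^{(2-p)/p}(2^{p-1}-1)-2^{1+2/p}$ divided by the positive factor $2\cdot2^{(2-p)/p}$, so the formula is right, as is the observation that keeping every $f_i(s_j)\neq0$ is exactly what makes the $p=1$ endpoint differentiable along your test direction. Your route is more elementary and self-contained; the paper's buys an argument valid for an arbitrary (nondegenerate) probability space with no calculus at all.

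Two loose ends remain in your version. First, the claim that the derivative vanishes only at $p=2$ rests on an unexecuted ``monotonicity check''; the relevant function $\psi(p)=\frac{2-p}{p}\ln\frac{1+2^p}{2}+\ln(2^{p-1}-1)$ has competing terms (one positive, one negative on each side of $p=2$) and its sign is not obvious, so you must actually prove $\psi<0$ on $(1,2)$ and $\psi>0$ on $(2,\infty)$; numerical sampling is not a proof, and this is the entire analytic content of your approach. Second, the probability space $(\Omega,\Sigma,\mu)$ is fixed at the outset of the paper, so strictly you establish the proposition only for your particular two-atom $\mu$ with equal weights; to match the statement you would have to transplant the example into the given $(\Omega,\Sigma,\mu)$ by splitting $\Omega$ into two sets of positive measure, and the resulting unequal weights change the constants in the derivative and reopen the nonvanishing question. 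Neither issue is fatal, but both require work the proposal currently only gestures at, whereas the paper's citation disposes of them in one line.
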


\begin{proof}
Since the space $L_p(\mu,H)$ $(p\neq2)$ is not an inner product
space, by (Ben\'{\i}tez, Fern\'{a}ndez and Soriano, 2002), there
exists a three-point set $\{f_1,f_2,f_3\}\subset L_p(\mu,H)$ such
that
\begin{align}\label{ali:1}
Z^2(\{f_1,f_2,f_3\})\cap\co(\{f_1,f_2,f_3\})=\emptyset.
\end{align}
On the other hand, we have that the set $Z^2(\{f_1,f_2,f_3\})$ is not empty and
\begin{align}\label{ali:2}
\frac{1}{3}\big(f_1(s)+f_2(s)+f_3(s)\big)\in Z^2(\{f_1(s),f_2(s),f_3(s)\}),
\ \ \ \text{ for all } \ \  s\in \Omega.
\end{align}
We define the function $g:\Omega\longrightarrow H$ by
$$g(s):=\frac{1}{3}\big(f_1(s)+f_2(s)+f_3(s)\big).$$
By construction of $g(s)$ one has $g\in \co(\{f_1,f_2,f_3\})$. On the other hand, because of $\{f_1,f_2,f_3\}\subset L_p(\mu,H)$, we have $g\in L_p(\mu,H)$. From (\ref{ali:1}),
have to $g\notin Z^2(\{f_1,f_2,f_3\})$. However from (\ref{ali:2}),
$g(s)$ is a $2$-center of
$\{f_1(s),f_2(s) ,f_3(s)\}$
for all $s\in\Omega$. This concludes the proof.
\end{proof}

\end{document}